\newtheorem{example}{Example}[section]
\newtheorem{lemma}{Lemma}[section]
\newtheorem{defn}{Definition}[section]
\newtheorem{cor}{Corollary}[section]
\numberwithin{equation}{section}
\newcommand\restr[2]{{% we make the whole thing an ordinary symbol
  \left.\kern-\nulldelimiterspace % automatically resize the bar with \right
  #1 % the function
  \vphantom{\big|} % pretend it's a little taller at normal size
  \right|_{#2} % this is the delimiter
  }}
\newcommand{\I}{\mathcal{U}}
\newcommand{\Ov}{\mathcal{O}}
\newcommand{\Zz}{\mathbb{Z}}
\newcommand{\N}{\mathbb{N}}
\DeclareMathOperator{\lcm}{\textup{lcm}}
\DeclareMathOperator{\ctype}{\textup{ctype}}
\DeclareMathOperator{\aut}{\textup{Aut}}
\DeclareMathOperator{\sym}{\textup{Sym}}
\DeclareMathOperator{\ord}{\textup{ord}}
\begin{document}

\title{The cycle index of the automorphism group of $\mathbb{Z}_n$}

%    Information for first author
\author{Vladimir Bo\v{z}ovi\'c and \v{Z}ana Kovijani\'c Vuki\'cevi\'c}

\begin{abstract}
We consider the group action of the automorphism group $\I_n=\aut(\Zz_n)$ on the set $\Zz_n$, that is the set of residue classes modulo $n$. Clearly, this group action provides a representation of $\I_n$ as a permutation group acting on $n$ points. One problem to be solved regarding this group action is to find its cycle index. Once it is found, there appears a vast class of related enumerative and computational problems with interesting applications. We provided the cycle index of specified group action in two ways. One of them is more abstract and hence compact, while another one is basically procedure of composing the cycle index from some \textit{building blocks}. However, those \textit{building blocks} are also well explained and finally presented in very detailed fashion.  
\end{abstract}

\maketitle

\section{Introduction}
Let $\I_n=\aut(\Zz_n)$ be the automorphism group of the cyclic additive group of residues modulo $n$. Throughout this paper, we treat $\Zz_n$  interchangeably, merely as a set $\{0,1,\ldots,n-1\}$ or as the additive, cyclic group. However, the context in which it is used will clearly determine its meaning. 

As it is well known, $\I_n$ is isomorphic to the multiplicative group of those integers in $\mathbb{Z}_n$ that are relatively prime to $n$, i.e. 
\[\I_n=\{\pi_a: \mathbb{Z}_n \rightarrow \mathbb{Z}_n \mid \pi_a(x)=ax \pmod{n},\,1 \leq a \leq n,\, (a,n)=1\}.\]
Based on that isomorphism, a mapping $\pi_a \in \I_n$, can be identified with an element $a \in \mathbb{Z}_n$, $(a,n)=1$. Further, we will be frequently using that convenient isomorphic correspondence, without risk of misconception. Hence, the natural group action of the group $\I_n$ on the set of elements of $\Zz_n$ could be seen as 
 \[(x,a)\rightarrow ax \pmod{n} \;\; \;\;(a \in \I_n,\;x\in \Zz_n),\]
Clearly, the automorphism group, $\I_n$, represented in the described way is a permutation group acting on $n$ points. Although elementary in its nature, it was surprising fact that, to the best of our knowledge, the cycle index of the described group action is still missing. Besides PhD thesis, \cite{Boz-thesis}, in which the problem is treated to some extent, there is only one paper, \cite{Wei}, that partially deals with the similar group action, and yet substantially different. Therefore, finding the cycle index, $\mathcal{Z}_{(\I_n,\mathbb{Z}_n)}$, of the described group action emerges as the main goal of this paper.  

Based on somewhat distinct, although complement approaches, we get two forms of the same result. The first one that looks more general, whereas another one is technically more detailed, supplying raw structure of cycle index $\mathcal{Z}_{(\I_n,\mathbb{Z}_n)}$. In the first approach, we get nice, compact result from the Corollary \ref{nice-compact}
\[\mathcal{Z}_{\left(\I_n,\mathbb{Z}_n\right)}= \frac{1}{\phi(n)}\sum_{a \in  \I_n}\prod_{d \mid n} x_{r_a(d)}^{\frac{\phi(d)}{r_a(d)}},\]
where $r_a(d)$ is a multiplicative order of an integer $a \in \I_n$ modulo $d$. 
%and \[\lambda(n)=\max\{r_a(n) \mid a \in \I_n\}\] is the maximal order among all orders of elements in $\I_n$.

In the second approach, we use the fact of the direct decomposition of additive abelian group and corresponding decomposition of its automorphism group 
\[\Zz_n \cong \bigoplus_{i=1}^s\Zz_{p_i^{\alpha_i}},\,\, \I_n\cong\bigoplus_{i=1}^s\I_{p_i^{\alpha_i}}.\]
As $n$ can be decomposed as a product of prime number powers, then we proceed by finding cycle index $\mathcal{Z}_{(\I_{p^{\alpha}},\mathbb{Z}_{p^{\alpha}})}$, for a prime number $p$ and $\alpha \in \mathbb{N}$, as a groundwork for utilization of known result given in \cite{HarHigh}, that is an algorithm for finding cycle index of direct product of permutation groups.

Once the cycle index is found, there is a vast class of enumerative and combinatorial problems that could be related to it. For example, one of the classical enumerative, combinatorial "targets" is a number of orbits or equivalence classes of subsets of $\Zz_n$.

\section{Preliminaries}
In this section, we bring up some basic, auxiliary results, notation and assumptions that will be used for the rest of the paper.

\begin{itemize}
\item By \textit{natural number} we assume positive integer.
\item By $\displaystyle \biguplus$ we denote disjoint union of sets.
\item The label $\phi$ will be exclusively used for the Euler's phi function.
\item By $C_n$ we denote a cyclic group of $n$ elements.
\item By $\sym(M)$ we denote the full symmetric group on the set $M$.
\item By $(a,b)$ we denote $\gcd(a,b)$ and by $[a,b]$ $\lcm(a,b)$.
\item By $\ord(g)$ we denote the order of an element $g$ in a group $G$.
\end{itemize}

Regarding a topic of group action, we slightly changed definitions of classical notions and accordingly, introduce some new notation. 
\begin{defn} \label{partial-cycle-index-defn}(Type of a Permutation) Let $P$ be a set with $|P| = n$. A permutation $\pi \in \sym(P)$
is of the type $(\lambda_1, \lambda_2, \ldots ,\lambda_n)$, iff $\pi$ can be written as the composition of $\lambda_i$ disjointed cycles of length
$i$, for $i = 1, \ldots ,n$. Hence, by $\lambda_i(\pi)$ we mean the number of cycles of length $i$ in the decomposition of $\pi$ into disjoint cycles. Shortly, we write
\[\ctype(\pi)=\prod_{i=1}^nx_i^{\lambda_i(\pi)}.\]
\end{defn}

Note that variable $x_i$ has only formal meaning, referring to the cycle of the length $i$.

\begin{defn}(Partial Cycle Index) Let $P$ be a set of $|P| = n$ elements and let $\Gamma$ be a subset of finite permutation group $\mathcal{G}_P$ acting on $P$.
\textit{The partial cycle index} of a subset $\Gamma \subseteq \mathcal{G}_P$ is defined as a polynomial in $n$ indeterminates
$x_1, \ldots , x_n$, defined as:
\[\mathcal{Z}_{(\Gamma,P)}(\mathcal{G}_P):= \frac{1}{|\mathcal{G}_P|}\sum_{\pi \in \Gamma}\ctype(\pi)=\frac{1}{|\mathcal{G}_P|}\sum_{\pi \in \Gamma} \prod_{i=1}^nx_i^{\lambda_i(\pi)}.\]
When $\Gamma = \mathcal{G}_P$, then $\mathcal{Z}_{(\mathcal{G}_P,P)}(\mathcal{G}_P)$ is called \textit{the cycle index} of $\mathcal{G}_P$ on $P$, or shortly $\mathcal{Z}_{(\mathcal{G}_P,P)}$.

\end{defn}
It should be emphasized that a natural number $n$, in the previous definition, is upper limit regarding the number of indeterminates of cycle index polynomial. The actuall number of indeterminates that could appear in the cycle index polynomial is $\lambda(n)$, that is the maximal order among of all orders of elements in $\mathcal{G}_P$.

Directly from the previous definition, we have this simple observation.
\begin{lemma}
Let $\mathcal{G}_P$ be a permutation group acting on a set $P$ of  $n$ elements and a $\Gamma_i \subseteq \mathcal{G}_P$, $i=1,2, \ldots, k$, where $k \in \mathbb{N}$, be a set partition of the set of elements of $\mathcal{G}_P$. Then, the cycle index of $\mathcal{G}_P$ on $P$ is
\[\mathcal{Z}_{(\mathcal{G}_P,P)}=\sum_{i=1}^k \mathcal{Z}_{(\Gamma_i,\mathcal{G}_P)}(\mathcal{G}_P).\]

\end{lemma}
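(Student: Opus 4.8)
The plan is to unwind the two relevant definitions and observe that the whole statement reduces to the additivity of a finite sum over a disjoint decomposition, together with a check that the normalizing constant matches on both sides. First I would record the structural fact underlying the hypothesis: saying that $\Gamma_1,\ldots,\Gamma_k$ is a set partition of $\mathcal{G}_P$ means precisely that $\mathcal{G}_P=\biguplus_{i=1}^k\Gamma_i$, i.e. every permutation $\pi\in\mathcal{G}_P$ belongs to exactly one block $\Gamma_i$. This is the only input beyond the definitions themselves.

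Next I would start from the definition of the full cycle index, $\mathcal{Z}_{(\mathcal{G}_P,P)}=\frac{1}{|\mathcal{G}_P|}\sum_{\pi\in\mathcal{G}_P}\ctype(\pi)$, and regroup the indexing set according to the partition. Since the sum ranges over a finite set that is the disjoint union of the $\Gamma_i$, each summand $\ctype(\pi)$ is counted once and in exactly one block, so
\[\sum_{\pi\in\mathcal{G}_P}\ctype(\pi)=\sum_{i=1}^k\sum_{\pi\in\Gamma_i}\ctype(\pi).\]
Dividing both sides by $|\mathcal{G}_P|$ and distributing the constant factor over the outer finite sum yields
\[\mathcal{Z}_{(\mathcal{G}_P,P)}=\sum_{i=1}^k\frac{1}{|\mathcal{G}_P|}\sum_{\pi\in\Gamma_i}\ctype(\pi)=\sum_{i=1}^k\mathcal{Z}_{(\Gamma_i,\mathcal{G}_P)}(\mathcal{G}_P),\]
where the last equality is just the definition of the partial cycle index of each block $\Gamma_i$.

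There is no genuine obstacle here; the single point that deserves explicit attention is the normalization. The partial cycle index $\mathcal{Z}_{(\Gamma_i,\mathcal{G}_P)}(\mathcal{G}_P)$ is by definition normalized by $|\mathcal{G}_P|$ and \emph{not} by $|\Gamma_i|$, so the constant $\frac{1}{|\mathcal{G}_P|}$ appearing in the full cycle index is exactly the constant carried by each block term. It is this agreement of normalizing factors that makes the identity hold verbatim, and I would flag it to guard against the natural temptation to renormalize each block by its own cardinality. Everything else is a routine rearrangement of a finite sum.
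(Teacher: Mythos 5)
Your proof is correct and takes essentially the same route as the paper, which simply cites the definition of the partial cycle index together with the fact that $\mathcal{G}_P=\biguplus_{i=1}^k\Gamma_i$; you have merely written out the regrouping of the finite sum that the paper leaves implicit. Your explicit remark about the shared normalizing factor $\frac{1}{|\mathcal{G}_P|}$ is a sensible clarification but not a departure from the paper's argument.
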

\begin{proof}
It is direct consequence of the Definition \ref{partial-cycle-index-defn} and the fact that $\mathcal{G}_P$ is disjoint union of $\Gamma_i$, $i=1,2, \ldots, k$, i.e. 
\[ \mathcal{G}_P=\biguplus_{i=1}^k \Gamma_i.\]
\end{proof}

Note that  $\Gamma$ in the previous definition is a subset, not necessarily a subgroup of $\mathcal{G}_P$. This notion of partial cycle index will be helpful later.  

Let us introduce the notion of $(r,k)-$coprime residue set in $\Zz_n$.
\begin{defn} Let $r,k$ be natural numbers such that $\gcd(r,k)=1$, $r<k$ and let $k$ be a divisor of natural number $n$. The set of integers
\[\I_{n}^{r}(k)=\{x\in \I_n \mid \,x\equiv r \pmod{k}\}\]
is called $(r,k)-$coprime residue set in $\Zz_n$.
\end{defn}
We prove that any $(r,k)-$coprime set in $\Zz_n$ is not empty. It is essentially a restatement and slight modification of the result given in Lemma 2, page 32, \cite{Cohn}. This result will be helpful later, in characterization of orbits of the examined group action.
\begin{lemma} \label{induklema}
%Lemma 2 Cohn strana 32, Advanced number theory
Let $r,k,\ell,n$ be natural numbers such that $\gcd(r,k)=1$, $r<k$ and $n=k\ell$. Then $(r,k)$-coprime set $\I_{n}^{r}(k)$ is
nonempty.
\end{lemma}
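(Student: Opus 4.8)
The plan is to exhibit, by an explicit Chinese Remainder Theorem construction, a single integer $x$ satisfying both constraints $x \equiv r \pmod{k}$ and $\gcd(x,n)=1$, and then to reduce it modulo $n$ so that it lands inside $\I_n$. The observation that sets everything up is that the congruence $x \equiv r \pmod{k}$ together with $\gcd(r,k)=1$ already forces $\gcd(x,k)=1$; consequently $x$ is automatically coprime to every prime dividing $k$, and the only possible obstruction to $\gcd(x,n)=\gcd(x,k\ell)=1$ comes from the primes that divide $\ell$ but do not divide $k$.

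Accordingly, I would first list the distinct primes $p_1,\dots,p_t$ dividing $\ell$ but not $k$. If there are none (the degenerate case $t=0$), then $r$ itself already works, since $\gcd(r,k)=1$ then gives $\gcd(r,k\ell)=1$ directly. Otherwise, writing the candidate in the form $x=r+mk$, the problem reduces to choosing the integer $m$ so that $p_i \nmid r+mk$ for every $i$.

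The main step is then a residue count followed by CRT. For each fixed $i$, since $p_i \nmid k$ the element $k$ is invertible modulo $p_i$, so the affine map $m \mapsto r+mk \pmod{p_i}$ is a bijection of $\Zz_{p_i}$; hence exactly one residue class of $m$ modulo $p_i$ makes $p_i \mid r+mk$, leaving $p_i-1\ge 1$ admissible classes. Because the $p_i$ are pairwise coprime, the Chinese Remainder Theorem produces an $m_0$ lying simultaneously in an admissible class modulo each $p_i$, and for this $m_0$ the integer $x_0=r+m_0k$ is divisible by none of $p_1,\dots,p_t$. Assembling coprimality, $x_0 \equiv r \pmod{k}$ gives $\gcd(x_0,k)=1$ while by construction $x_0$ avoids every prime of $\ell$ not already dividing $k$; together these show $x_0$ shares no prime factor with $k\ell=n$, i.e.\ $\gcd(x_0,n)=1$. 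Reducing $x_0$ modulo $n$ gives a representative in $\{0,\dots,n-1\}$ which, since $k \mid n$, is still $\equiv r \pmod{k}$ and, reduction preserving coprimality to $n$, still satisfies the coprimality condition; this representative is the desired element of $\I_n^r(k)$.

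I expect the only delicate point to be the bookkeeping of the two families of primes — those dividing $k$, which are handled for free by $x \equiv r \pmod{k}$, versus those dividing $\ell$ but not $k$, which are handled by the CRT choice of $m_0$ — together with correctly disposing of the degenerate case $t=0$; everything else is routine. Alternatively, the same conclusion can be reached by induction on the number of prime factors of $\ell$ that are coprime to $k$, in the spirit of Lemma~2 of \cite{Cohn}, but the direct CRT argument seems cleanest and entirely self-contained.
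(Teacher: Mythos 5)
Your proposal is correct and follows essentially the same route as the paper's own proof: both write the candidate as $x=r+mk$, observe that primes dividing $k$ are handled automatically by $\gcd(r,k)=1$, exclude the single bad residue class of $m$ modulo each remaining prime (where $k$ is invertible), and glue the choices via the Chinese Remainder Theorem before reducing modulo $n$. The only cosmetic differences are that you restrict the CRT to the primes dividing $\ell$ but not $k$ (the paper runs it over all prime power divisors of $n$, taking $t_i=0$ when $p_i\mid k$) and that you dispose of the degenerate case $t=0$ explicitly.
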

\begin{proof}
We prove for given $r,k$ and $n$ and $\gcd(r,k)=1$, there exists $t$ such that
\[\gcd(r+ut,n)=1\]
Let $p_i^{v_i}$ be a general prime power divisor of $n$. Then, there exists $t_i$ such that
\[\gcd(r+kt_i,p_i^{v_i})=1\]
Namely, if $p_i \mid k$, then $p_i \nmid r$ and $t_i=0$ suffices. If $p_i \nmid k$, than any number $t_i$ such that
\[t_i \not\equiv -r/k \pmod{p_i}\] will work.
By Chinese Reminder Theorem, there exists $t$ such that
\[t \equiv t_i \pmod {p_i}\]
and $\gcd(r+kt,n)=1$.
We need to prove that there exists $x\in \I_n$ such that $x\equiv r \pmod{k}$. Let $x \equiv r+kt \pmod{n}$. Since $k \mid n$ then
$x \equiv r \pmod {k}$. Also, it is easy to see that $\gcd(x,n)=1$ and therefore $x \in \I_n$.
\end{proof}

The following lemma, almost textbook statement, we introduce without proof.

\begin{lemma}\label{genistcikl}
Let $C_n = \langle a \rangle$ be a cyclic group of $n$ elements and let $d \in \mathbb{N}$ be an integer such that $d \mid n$. By $A_d$ denote the set of all elements of $C_n$ of order $d$. Then, 
\[ A_d=\left\{a^{ \frac{n}{d}t} \mid  t \in \mathbb{N} \text{ and }\gcd(t, d) = 1\right\}.\]
Apparently $| A_d|=\phi(d)$. Also, $C_n$ is disjoint union of those sets, that is
\[C_n=\biguplus_{d \mid n} A_d.\]

\end{lemma}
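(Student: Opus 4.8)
The plan is to build everything on the single standard fact that in the cyclic group $C_n=\langle a\rangle$ the order of a power is given by $\ord(a^m)=n/\gcd(m,n)$. From this formula the three assertions of the lemma follow in sequence, with essentially no other input.

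First I would establish the set equality. An element $a^m$ lies in $A_d$ precisely when $\ord(a^m)=d$, i.e.\ when $n/\gcd(m,n)=d$, equivalently $\gcd(m,n)=n/d$. Since $d\mid n$, any such $m$ must be divisible by $n/d$, so I may write $m=(n/d)t$ for an integer $t$. The crux of this step is the computation
\[\gcd\!\left(\tfrac{n}{d}t,\,n\right)=\tfrac{n}{d}\,\gcd(t,d),\]
which uses $n=(n/d)\cdot d$ to pull the common factor $n/d$ out of the gcd. Hence $\gcd(m,n)=n/d$ holds if and only if $\gcd(t,d)=1$, giving exactly the claimed description $A_d=\{a^{(n/d)t}\mid \gcd(t,d)=1\}$.

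Next, for the cardinality I would note that the exponent map $t\mapsto a^{(n/d)t}$ identifies two values $t,t'$ precisely when $(n/d)t\equiv (n/d)t'\pmod{n}$, i.e.\ when $t\equiv t'\pmod{d}$. Thus the distinct elements of $A_d$ are in bijection with the residues $t$ modulo $d$ satisfying $\gcd(t,d)=1$, of which there are exactly $\phi(d)$; therefore $|A_d|=\phi(d)$.

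Finally, the disjoint-union decomposition is a bookkeeping consequence: every element of $C_n$ has a well-defined order, that order divides $n$ by Lagrange's theorem, and no element can have two distinct orders, so the sets $A_d$ for $d\mid n$ are pairwise disjoint and exhaust $C_n$. As a consistency check, summing cardinalities recovers the familiar identity $\sum_{d\mid n}\phi(d)=n=|C_n|$. The only genuinely computational point is the gcd identity in the first step; everything else is routine, which is presumably why the authors state this lemma without proof.
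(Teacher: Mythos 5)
Your proof is correct, and it is precisely the standard textbook argument: the paper deliberately states this lemma without proof (``almost textbook statement''), so there is no authorial proof to diverge from. Every step checks out --- the identity $\gcd\bigl(\tfrac{n}{d}t,\,n\bigr)=\tfrac{n}{d}\gcd(t,d)$ is applied correctly, the counting via residues $t \bmod d$ coprime to $d$ gives $\phi(d)$, and the partition follows from uniqueness of element order together with Lagrange's theorem.
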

\qed

Let $\Omega_n^d$, where $d \mid n$, be the set of elements of additive order $d$ in the $\Zz_n$. Then, according to Lemma \ref{genistcikl}, we have
\[\Zz_n=\biguplus_{d \mid n}\Omega_n^d\]
and $|\Omega_n^d|=\phi(d)$.
The partition of $\Zz_n$, we just specified, will play important role in the analysis of the group action we are dealing with.

\section{Cycle index of $\I_n$ - general case}

As pointed in the Introduction, we consider the natural group action of the group $\I_n$ on the set of elements of $\Zz_n$, given by
 \[(x,a)\rightarrow ax \pmod{n}\;\; \;\;(a \in \I_n,\;x\in \Zz_n).\]

Based on results in the previous section, we prove that the typical orbit of the aforementioned group action is actually $\Omega_n^d$, that is the set of elements of additive order $d$ in the $\Zz_n$.
%lema 2.9
\begin{lemma} \label{transitonzn} Let $d$, $n$ be natural numbers, such that $d \mid n$. Then 
\[\Omega_n^d=\frac{n}{d}\cdot \I_d=\left \lbrace   \frac{n}{d}t \mid 1 \leq t \leq d \text{ and } \gcd(t,d)=1\right\rbrace . \]
Also, $\Omega_n^d$ is an orbit under the action of the group $\I_n$ on $\Zz_n$.
\end{lemma}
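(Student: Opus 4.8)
The plan is to first pin down $\Omega_n^d$ as a concrete set and then verify the two properties that make a subset of $\Zz_n$ an orbit, namely invariance under the action and transitivity of the action on it. For the set description, I would start from the standard fact that the additive order of $x \in \Zz_n$ equals $n/\gcd(x,n)$, so that $x \in \Omega_n^d$ precisely when $\gcd(x,n) = n/d$. Writing such an $x$ as $x = \frac{n}{d}t$, the identity $\gcd(\frac{n}{d}t, n) = \frac{n}{d}\gcd(t,d)$ shows that $\gcd(x,n)=n/d$ is equivalent to $\gcd(t,d)=1$; conversely every $\frac{n}{d}t$ with $\gcd(t,d)=1$ has order $d$, and restricting $t$ to $1 \le t \le d$ yields exactly $\phi(d)$ distinct representatives. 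This is just Lemma \ref{genistcikl} read in the additive group $\Zz_n = \langle 1 \rangle$, where $a^{(n/d)t}$ becomes $\frac{n}{d}t$; since $\{t : 1 \le t \le d,\ \gcd(t,d)=1\}$ is precisely (the underlying set of) $\I_d$, both displayed equalities follow at once.

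For invariance I would use that each $\pi_a \in \I_n$ is a group automorphism of $\Zz_n$ and hence preserves additive orders: if $x$ has order $d$, then $ax \pmod n$ again has order $d$, so $\pi_a(\Omega_n^d) \subseteq \Omega_n^d$. Thus $\Omega_n^d$ is automatically a union of orbits, and it remains only to show that the action is transitive on it.

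Transitivity is the crux, and the step where I expect the real work to lie. Given $x = \frac{n}{d}s$ and $y = \frac{n}{d}t$ in $\Omega_n^d$ (so $\gcd(s,d)=\gcd(t,d)=1$), I want some $a \in \I_n$ with $ax \equiv y \pmod n$. Since $\frac{n}{d}(as-t) \equiv 0 \pmod n$ if and only if $as \equiv t \pmod d$, and $s$ is invertible modulo $d$, the requirement on $a$ collapses to the single congruence $a \equiv t s^{-1} \pmod d$; setting $r \equiv t s^{-1} \pmod d$ we note $\gcd(r,d)=1$. The obstacle is that not every residue class modulo $d$ need contain a unit of $\Zz_n$, so I cannot simply take $a=r$: I need an $a$ that is simultaneously $\equiv r \pmod d$ and coprime to all of $n$. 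This is exactly what Lemma \ref{induklema} supplies: writing $n = d\ell$, the $(r,d)$-coprime set $\I_n^{r}(d)$ is nonempty, so such an $a \in \I_n$ exists (the degenerate case $d=1$, where $\Omega_n^d = \{0\}$, being immediate). For this $a$ one computes $ax \equiv \frac{n}{d}(as) \equiv \frac{n}{d}(rs) \equiv \frac{n}{d}t = y \pmod n$, which establishes transitivity. Combined with invariance, this proves that $\Omega_n^d$ is a single orbit.
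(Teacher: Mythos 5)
Your proposal is correct and takes essentially the same route as the paper: the set description via Lemma \ref{genistcikl}, and transitivity by reducing $ax \equiv y \pmod{n}$ to a single congruence modulo $d$ and invoking Lemma \ref{induklema} to find a unit of $\Zz_n$ in the required residue class (your $a \equiv ts^{-1} \pmod d$ is exactly the paper's $h \equiv k_1k_2^{-1} \pmod d$). The only difference is that you explicitly verify invariance of $\Omega_n^d$ under automorphisms, a point the paper leaves implicit; this is a minor gain in completeness, not a different argument.
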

\begin{proof}
%It is well known fact that for every divisor of the order of finite cyclic group, there exists unique cyclic group of that order. 
The first fact of the claim is trivial consequence of Lemma \ref{genistcikl}, so we have
\[\Omega_n^d=\frac{n}{d}\cdot \I_d.\]
We prove that $\Omega_n^d$ is an orbit in the action of  $\I_n$ on $\Zz_n$.
Let $x$ and $y$ be elements of order $d$. Then we have $x=(n/d)k_1$ and $y=(n/d)k_2$ where $k_1,k_2 \in \I_d$. Therefore, there exists $k \in \I_d$ and $k_1=kk_2$. 

On the other hand, Lemma \ref{induklema} claims that $\I_{n}^{k}(d)$ is nonempty, i.e. the existence of an element $h \in \I_n$ such that $h \equiv k \pmod{d}$. Clearly $k_1\equiv hk_2 \pmod{d}$. By multiplying both sides by $(n/d)$ we have
\[x \equiv hy \pmod{n}.\] 
Thus, $\I_n$ is transitive on the set of elements of (additive) order $d$.
\end{proof}

Let $a$, $d$ be natural numbers such that $\gcd(a,d)=1$. Denote by $r_a(d)$ the order of $a$ with respect to modulo $d$, i.e. 
 \[ r_a(d)=\min \{k \in \N \mid a^k \equiv 1 \pmod{d}\}.\]
 
The following lemma has a key role in a description of how the mapping ${\pi_a: \mathbb{Z}_n \rightarrow \mathbb{Z}_n}$, defined as $\pi_a(x)=ax \pmod{n}$, acts on a typical orbit $\Omega_n^d$.
\begin{lemma} \label{gen-na-gen}
Let $d,n,a$ be natural numbers such that $d \mid n$ and $\gcd(a,n)=1$. Consider $\tau =\restr{\pi_a}{\Omega_n^d}$, that is restriction of the mapping $\pi_a: \mathbb{Z}_n \rightarrow \mathbb{Z}_n$, defined as $\pi_a(x)=ax \pmod{n}$, on $\Omega_n^d$. In other words, 
\[\tau(x) = ax \pmod{n}, \text{ where } x \in \Omega_n^d.\]
 
Then, $\tau$ is a permutation of $\Omega_n^d$ and 
\[\ctype(\tau)=x_k^m, \;\; \text{where}\;\;k=r_a(d),\,\, m=\frac{\phi(d)}{k}.\]

\end{lemma}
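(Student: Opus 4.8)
The plan is to understand how $\pi_a$ permutes the orbit $\Omega_n^d$ by transporting the action to a cleaner setting via the identification from Lemma~\ref{transitonzn}. First I would note that $\tau$ genuinely maps $\Omega_n^d$ to itself: if $x$ has additive order $d$, then since $\gcd(a,n)=1$ the element $ax \pmod n$ also has additive order $d$, so $\tau$ is a well-defined self-map of a finite set, and being injective (as the restriction of the bijection $\pi_a$) it is a permutation. The cycle type of a permutation is determined by its conjugacy class, so I am free to replace $\tau$ by a conjugate acting on a more convenient set.

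Next I would exploit the explicit description $\Omega_n^d = \frac{n}{d}\cdot \I_d = \{\frac{n}{d}t \mid 1\le t\le d,\ \gcd(t,d)=1\}$ from Lemma~\ref{transitonzn}. The map $t \mapsto \frac{n}{d}t \pmod n$ is a bijection from $\I_d$ onto $\Omega_n^d$, and under it the action $x \mapsto ax \pmod n$ corresponds exactly to multiplication $t \mapsto (a \bmod d)\, t \pmod d$ on $\I_d$; this is because $a\cdot\frac{n}{d}t \equiv \frac{n}{d}(at) \pmod n$ and $\frac{n}{d}t_1 \equiv \frac{n}{d}t_2 \pmod n$ iff $t_1 \equiv t_2 \pmod d$. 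Hence $\tau$ is conjugate to the permutation $\sigma$ of $\I_d$ given by $\sigma(t) = \bar a\, t \pmod d$, where $\bar a = a \bmod d$, and $\gcd(\bar a, d)=1$ so $\bar a \in \I_d$.

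The core of the argument is then a cycle-length computation for multiplication by $\bar a$ on the multiplicative group $\I_d$ (a group of order $\phi(d)$). For any $t \in \I_d$, the length of the cycle containing $t$ is the least $j\ge 1$ with $\bar a^{\,j} t \equiv t \pmod d$, i.e. $\bar a^{\,j} \equiv 1 \pmod d$ (cancelling the unit $t$), which is precisely $k = r_a(d) = \ord(\bar a)$ in $\I_d$. Thus every cycle has the same length $k$, the cycles partition a set of size $\phi(d)$, and there are therefore $m = \phi(d)/k$ of them, giving $\ctype(\tau) = x_k^m$. I expect the only delicate point to be verifying cleanly that the change-of-variables bijection $t\mapsto\frac{n}{d}t$ really intertwines the two actions — i.e. that multiplication modulo $n$ on $\Omega_n^d$ is faithfully mirrored by multiplication modulo $d$ on $\I_d$, including the reduction $a \mapsto \bar a$ — after which the uniform-cycle-length fact is the standard observation that left multiplication by a group element has all cycles of length equal to that element's order.
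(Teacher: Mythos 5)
Your proposal is correct and takes essentially the same route as the paper's own proof: both rely on Lemma \ref{transitonzn} to write every element of $\Omega_n^d$ as $\frac{n}{d}t$ with $t$ a unit modulo $d$, and both reduce the cycle-length condition $a^j x \equiv x \pmod{n}$ to $a^j \equiv 1 \pmod{d}$ by cancelling that unit, yielding the common cycle length $k = r_a(d)$ and hence $m = \phi(d)/k$ cycles. Your conjugation/intertwining packaging is merely a more structural phrasing of the paper's direct element-wise computation (which establishes $s \le k$ and $k \le s$ for an arbitrary $c \in \Omega_n^d$), so the mathematical content is the same.
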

%?? ?? ?? ?????? ?? ?? ??? ????? ?????? ?? ???????, ????? ?? ??? 
%??????? ??????...

\begin{proof}
We know, from Lemma \ref{transitonzn}, that $\Omega_n^d$ is an orbit of $\pi_a$, so we conclude that $\tau: \Omega_n^d \rightarrow \Omega_n^d$. Since, $\pi_a$ is a bijection, then $\tau$ is certainly injection on $\Omega_n^d$. However, $\Omega_n^d$ is finite set, so $\tau$ must be bijection.

According to Lemma \ref{transitonzn}, an arbitrary element $c \in \Omega_n^d$ is of the form 
\[c=\frac{n}{d}{v}, \; \text{where}\;\gcd(v,d)=1.\]
Let us consider the cycle that $c$ belong to, considering the mapping $\tau$ and let $s$ be the length of that cycle
\[(c \rightarrow ac \rightarrow  a^2c \rightarrow \ldots  \rightarrow a^{s-1}c).\]
From $a^kv \equiv v \pmod{d}$ it follows 
\[(a^k-1)v=dt,\text{ for some } t \in \mathbb{Z}.\]
Therefore,
\[(a^k-1)\frac{n}{d}v=nt, \text{ that means } a^kc \equiv c \pmod{n}.\]

Since $s$ is the least number such that $a^sc \equiv c \pmod{n}$, then $s \leq k$.

On the other hand, from  $a^sc \equiv c \pmod{n}$, it follows 
\[a^sv \equiv v \pmod{d}. \] 
Since, $\gcd(v,d)=1$, we conclude $ a^s \equiv 1 \pmod{d}$.
However, $k=r_a(d)$ and thus $k \leq s$, so we finally get $k=s$. 
Since $c$ is an arbitrary element in $\Omega_n^d$, we conclude that every cycle of $\tau$ is of the same length $k=r_a(d)$. 
\end{proof}

%\begin{lemma} \label{gen-na-gen} Let $f_j,g_j$ be mappings from $\Omega_n^d_{2^m}$ to $\Omega_n^d_{2^m}$, $m\geq 3$ defined by
%\[f_j(x):=3^jx \text{ and } g_j(x)=-3^jx, \]
%for $j \geq 1$. Let $s_j$ be order of $f_j \in \I_{2^m}$. Then
%\[ctype(f_j)=(s_j^{u_j})\]
%where $u_j=2^{m-1}/s_j$. Note that $ctype(g_j)=ctype(f_j)$.
%\end{lemma}
%\begin{proof}
%Consider the mapping $f_j \in \I_{2^m}$. Let $x_0 \in \Omega_n^d_{2^m}$ be arbitrary element. Since $f_j^s(x_0) \equiv x_0 \pmod{2^m}$ if and only if $s \equiv 0 \pmod{s_j}$, then it is clear that every cycle must contain exactly $s_j$ elements. Since the order of $g_j$ is equal to order $f_j$, conclusion $ctype(g_j)=ctype(f_j)$ follows easily.
%\end{proof}

\begin{cor}\label{nice-compact}
Let $n,a$ be natural numbers such that $d \mid n$ and $\gcd(a,n)=1$. Then, bijection $\pi_a: \mathbb{Z}_n \rightarrow \mathbb{Z}_n$, defined as $\pi_a(x)=ax \pmod{n}$, has cyclic structure
\[\ctype(a)=\prod_{d \mid n} x_{r_a(d)}^{\frac{\phi(d)}{r_a(d)}}.\]

Accordingly, the cycle index of $\I_n$, acting on the set $\mathbb{Z}_n$, is
\begin{equation} \label{cycle-index-compact}
\mathcal{Z}_{(\I_n,\mathbb{Z}_n)}= \frac{1}{\phi(n)}\sum_{a \in  \I_n}\prod_{d \mid n} x_{r_a(d)}^{\frac{\phi(d)}{r_a(d)}}. \tag{$\Asterisk$}
\end{equation}

\end{cor}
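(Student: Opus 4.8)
The plan is to read the corollary off Lemma \ref{gen-na-gen} by exploiting the orbit partition $\Zz_n = \biguplus_{d\mid n}\Omega_n^d$ established in the previous section. First I would isolate the one structural fact that makes the argument work: the cycle type of a permutation is multiplicative over a decomposition of the ground set into invariant blocks. Concretely, if $P = \biguplus_j B_j$ and $\pi \in \sym(P)$ maps each $B_j$ to itself, then the disjoint-cycle decomposition of $\pi$ is precisely the union of the disjoint-cycle decompositions of the restrictions $\restr{\pi}{B_j}$. Consequently the number of $i$-cycles of $\pi$ is the sum over $j$ of the numbers of $i$-cycles of each $\restr{\pi}{B_j}$, which in the notation of Definition \ref{partial-cycle-index-defn} is exactly $\ctype(\pi) = \prod_j \ctype(\restr{\pi}{B_j})$, since multiplying the monomials $\prod_i x_i^{\lambda_i}$ adds the exponents of each $x_i$.

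Next I would specialize this to $\pi = \pi_a$ and $B_d = \Omega_n^d$. By Lemma \ref{transitonzn} each $\Omega_n^d$ is an orbit of the $\I_n$-action, hence in particular invariant under $\pi_a$; combined with the partition $\Zz_n = \biguplus_{d\mid n}\Omega_n^d$ this places us exactly in the situation of the preceding paragraph, giving
\[\ctype(a) = \prod_{d\mid n}\ctype\!\left(\restr{\pi_a}{\Omega_n^d}\right).\]
Lemma \ref{gen-na-gen} evaluates each factor as $x_{r_a(d)}^{\phi(d)/r_a(d)}$, and substituting this in yields the first assertion $\ctype(a) = \prod_{d\mid n} x_{r_a(d)}^{\phi(d)/r_a(d)}$.

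Finally I would assemble the cycle index itself. By the definition of the cycle index (the case $\Gamma = \mathcal{G}_P$ of the partial cycle index), $\mathcal{Z}_{(\I_n,\Zz_n)} = \frac{1}{|\I_n|}\sum_{a\in\I_n}\ctype(a)$; using $|\I_n| = \phi(n)$ together with the formula for $\ctype(a)$ just obtained produces \eqref{cycle-index-compact}.

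I do not expect a genuine obstacle, since this is a corollary: the only point needing care is the multiplicativity of $\ctype$ over the blocks $\Omega_n^d$, i.e.\ verifying that distinct blocks contribute genuinely disjoint cycles and that no cycle of $\pi_a$ crosses from one block to another. Both follow immediately from the disjointness in $\Zz_n = \biguplus_{d\mid n}\Omega_n^d$ and the invariance supplied by Lemma \ref{transitonzn}, so once that observation is stated cleanly the remainder is bookkeeping.
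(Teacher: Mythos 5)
Your proof is correct and is exactly the argument the paper intends: the corollary is stated with no written proof precisely because it follows from the partition $\Zz_n=\biguplus_{d\mid n}\Omega_n^d$ (Lemma \ref{transitonzn}), the evaluation of $\ctype$ on each invariant block via Lemma \ref{gen-na-gen}, and the definition of the cycle index with $|\I_n|=\phi(n)$. Your explicit verification of the multiplicativity of $\ctype$ over invariant blocks is a sound way of making the paper's implicit step precise.
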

\qed

We should notice that the number of indeterminates in the polynomial $\mathcal{Z}_{(\I_n,\mathbb{Z}_n)}$ is actually equal to the maximal multiplicative order among elements in $\I_n$, that is 

\[\lambda(n)=\max\{r_a(n) \mid a \in \I_n\}.\] 
As it is principally analysed and resolved in \cite{Char}, for a natural number $n$ represented as a product of powers of prime numbers 
$n=\prod_{i=1}^k p_i^{e_i}$, we have that $\lambda(n)$ is equal to the least common multiplier of $\lambda(p_i^{e_i})$, for $i=1,\ldots,k$. Thus,
\[\lambda(n)= \left[ \lambda(p_1^{e_1}), \lambda(p_2^{e_2}),\ldots,\lambda(p_k^{e_k})\right],\]
where 
\[
 \lambda(p^e)= 
  \begin{cases}
   \phi(p^e) & \text{if p an odd prime}, e \geq 1\\
   \phi(2^{e}) & \text{if } p=2, e\leq 2\\
   2^{e-2} & \text{if }p=2, e>2.  \\
  \end{cases}
\]
Therefore, only indeterminates $x_1,x_2,\ldots,x_{\lambda(n)}$ appear in the cycle index $\mathcal{Z}_{(\I_n,\mathbb{Z}_n)}$.

\section{Cycle index of $\I_{p^m}$}
In this section, we present more comprehensive look over the cycle index of the group $\I_{p^m}$ acting on $\mathbb{Z}_{p^m}$. Since the algebraic structure of $\I_{p^m}$ differs in two basic cases: when $p=2$ and when $p$ is an odd prime number, we will consider the same two cases in the course of finding cycle index of $\I_n$. 

Once a cycle index of $\I_{p^m}$, for a prime number $p$, is found, it will serve as a building block for compounding cycle index of $\I_n$, where $n$ is naturally represented as a product of powers of prime numbers.

\subsection{Cycle index of $\I_{2^m}$}
We start with the case of $\I_{2^m}$ acting on $\mathbb{Z}_{2^m}$. First of all, we need to determine how elements of $\I_{2^m}$ look like, considering the fact that it is not a cyclic group for $m \geq 3$, but direct product $C_2 \times C_{2^{m-2}}$. Still, there is a way for all elements of $\I_{2^m}$ to be represented in  functional form.

\begin{defn}
An integer $a$ is said to be a semi-primitive root modulo $n$ if the order of a modulo $n$ is equal to $\phi(n)/2$, where $\phi$ is Euler function.
\end{defn}

It is shown in \cite{Gauss} that 3 is a semi-primitive root modulo $2^m$. Thus, the order of $3$ modulo $2^m$ is $2^{m-2}$, for any integer $m \geq 3$ and 
\[\I_{2^m}=\left\{\pm 3^i \pmod{2^m} : i=1,\ldots,2^{m-2}\right\}.\]
Then, the following lemma is just rewording of the previous fact.

\begin{lemma} \label{generator-3} 
%Gausov dokaz o generatorima grupe I_{2^m}, ima ga u Shanks / unsolved and solved problems, sekcija 35, lemma1, strana 98, navesti ovo u preliminaries
 
For an arbitrary element $w \in \I_{2^m}$, if $m \geq 3$, there exists unique pair $(a,b)$, $a \in \{0,1\}$ and $b\in \{0,1,\ldots,2^{m-2}-1\}$ such that
\[w=(-1)^a3^b.\]
%The order of element $3$ is $2^{m-2}$ modulo $2^m$ if $m\geq 3$. 
\end{lemma}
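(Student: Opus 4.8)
The lemma is, as the authors remark, essentially a repackaging of the description $\I_{2^m} = \{\pm 3^i \pmod{2^m} : i = 1, \ldots, 2^{m-2}\}$ together with the fact that $\ord(3) = 2^{m-2}$ modulo $2^m$. Accordingly, the plan is to separate the two assertions hidden in the statement, namely existence and uniqueness of the pair $(a,b)$, and to settle uniqueness by a counting argument rather than a direct computation.

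For existence, I would argue as follows. Since $3$ has order $2^{m-2}$ modulo $2^m$, the residue $3^i \pmod{2^m}$ depends only on $i$ modulo $2^{m-2}$; hence the cited set may already be rewritten as $\{\pm 3^b : b \in \{0,1,\ldots,2^{m-2}-1\}\}$. Thus every $w \in \I_{2^m}$ equals $(-1)^a 3^b$ for some $a \in \{0,1\}$ (recording the sign) and some $b$ in the prescribed range, which is exactly the asserted existence.

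For uniqueness I would count. The index set of admissible pairs $(a,b)$ has cardinality $2 \cdot 2^{m-2} = 2^{m-1}$, which coincides with $|\I_{2^m}| = \phi(2^m) = 2^{m-1}$. By the existence part, the assignment $(a,b) \mapsto (-1)^a 3^b$ is a surjection between two finite sets of the same size, hence a bijection; its injectivity is precisely the uniqueness of the representation. This shortcut has the advantage of sidestepping the only genuinely delicate point, which I describe next.

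Should one instead prove uniqueness head-on, the equality $(-1)^a 3^b \equiv (-1)^{a'} 3^{b'} \pmod{2^m}$ rearranges to $(-1)^{a-a'} \equiv 3^{b'-b} \pmod{2^m}$. When $a = a'$ this reads $3^{b'-b} \equiv 1$, and since $\ord(3) = 2^{m-2}$ with $b, b' \in \{0,\ldots,2^{m-2}-1\}$ one gets $b = b'$. The remaining case $a \neq a'$ amounts to showing $-1 \notin \langle 3 \rangle$ modulo $2^m$, and this is the step I expect to be the main obstacle. I would resolve it by reduction modulo $8$: every power of $3$ is congruent to $1$ or $3$ modulo $8$, whereas $-1 \equiv 7 \pmod{8}$, so $-1$ is not a power of $3$ modulo $8$, and therefore not modulo $2^m$ for $m \geq 3$ either. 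Because the counting argument avoids this case analysis entirely, I would present it as the primary proof and mention the direct computation only as a cross-check.
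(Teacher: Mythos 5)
Your proposal is correct and rests on exactly the same foundation as the paper, which offers no proof at all: it declares the lemma a ``rewording'' of the cited fact that $3$ is a semi-primitive root modulo $2^m$ with $\I_{2^m}=\{\pm 3^i \pmod{2^m} : i=1,\ldots,2^{m-2}\}$, which is precisely the fact you invoke for existence, while your counting argument (a surjection between sets of equal size $2^{m-1}=\phi(2^m)$ is a bijection) cleanly supplies the uniqueness the paper leaves implicit. Your mod-$8$ cross-check that $-1\notin\langle 3\rangle$ is also sound, and correctly identifies the one point of genuine content hidden in the ``rewording.''
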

\qed

It might be useful to mention that 5 is a semi-primitive for $\I_{2^m}$, $m  \geq 3$ as well. 
 
The claims of the following two corollaries are either obvious or directly coming from the Lemma \ref{generator-3}.  
 
\begin{cor} \label{rd-plus3-nam}

Let $a=3^{2^sr}$, $d=2^l$, where $l \geq 1$, $s \geq 0$, $r =1,3,\ldots, 2^{m-2-s}-1$. Then

\[
 r_a(d)= 
  \begin{cases}
   1 & \text{if } l=1;s\geq 0, \\
   2 & \text{if } l=2;s=0, \\
   1 & \text{if } l=2; s \geq 1, \\
   2^{l-2-s} & \text{if } l\geq 3; s < l-2, \\
   1& \text{if } l\geq 3; s\geq  l-2. \\
  \end{cases}
\]

\end{cor}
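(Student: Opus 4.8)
The plan is to reduce the entire statement to two ingredients: the multiplicative order of $3$ modulo $2^l$, and the standard formula for the order of a power inside a cyclic group. Writing $N=r_3(2^l)$ for the order of $3$ modulo $d=2^l$, the element $a=3^{2^s r}$ lies in the cyclic subgroup $\langle 3\rangle \leq \I_{2^l}$, and the order of a power in a cyclic group of order $N$ is
\[
r_a(2^l)=\frac{N}{\gcd(N,\,2^s r)}.
\]
Everything then comes down to evaluating $N$ and this gcd in each regime of $l$ and $s$.

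First I would pin down $N=r_3(2^l)$. For $l=1$ every odd integer is $\equiv 1 \pmod{2}$, so $N=1$. For $l=2$ we have $3\equiv -1\pmod{4}$, hence $N=2$. For $l\geq 3$, Lemma \ref{generator-3} together with the preceding fact that $3$ is a semi-primitive root modulo $2^l$ gives $N=2^{l-2}$. In all three cases $N$ is a power of $2$, which is exactly what makes the subsequent gcd computation clean.

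Next, since $r$ is odd and $N$ is a power of $2$, the factor $r$ contributes nothing to a gcd with $N$, so $\gcd(N,\,2^s r)=\gcd(N,\,2^s)=2^{\min(e,\,s)}$, where $2^e=N$. Substituting into the order formula and splitting on whether $s$ lies below or at/above $e=l-2$ produces the five listed cases. Concretely: for $l=1$ the quotient is $1/1=1$; for $l=2$ it is $2/\gcd(2,2^s)$, which is $2$ when $s=0$ and $1$ when $s\geq 1$; and for $l\geq 3$ it is $2^{l-2}/2^{\min(l-2,\,s)}$, equal to $2^{l-2-s}$ when $s<l-2$ and to $1$ when $s\geq l-2$.

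There is no serious obstacle here; the substantive content is the order of $3$ modulo $2^l$, which we import from Lemma \ref{generator-3}. The only point demanding care is the bookkeeping at the case boundaries: verifying that $r$ odd genuinely annihilates the odd part of the exponent inside the gcd, and that the collapse to order $1$ at the transition $s=l-2$ is correctly captured by the $\min$ in the exponent.
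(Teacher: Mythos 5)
Your proposal is correct and is precisely the argument the paper leaves implicit: the corollary is stated there without proof (declared ``obvious or directly coming from Lemma \ref{generator-3}''), and your route --- take $N=r_3(2^l)$, namely $1$, $2$, or $2^{l-2}$ by the semi-primitive-root fact, then apply $\ord(g^k)=N/\gcd(N,k)$ with $\gcd(N,2^sr)=2^{\min(l-2,\,s)}$ since $r$ is odd --- is the same order-of-powers computation the authors themselves invoke later in the proof of Lemma \ref{zaneparne-total}. Your case bookkeeping at the boundaries ($l=2$, $s=0$ versus $s\geq 1$, and $s<l-2$ versus $s\geq l-2$) matches the five cases exactly, so nothing is missing.
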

\qed

\begin{cor} \label{rd-minus3-nam}

Let $a=-3^{2^sr}$, $d=2^l$, where $l \geq 1$, $s \geq 0$, $r =1,3,\ldots, 2^{m-2-s}-1$. Then

\[
 r_a(d)= 
  \begin{cases}
   1 & \text{if } l=1;s\geq 0, \\
   1 & \text{if } l=2;s=0, \\
   2 & \text{if } l=2; s \geq 1, \\
   2^{l-2-s} & \text{if } l\geq 3; s < l-2, \\
   2& \text{if } l\geq 3; s\geq  l-2. \\
  \end{cases}
\]

\end{cor}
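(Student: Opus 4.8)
The plan is to exploit the direct product decomposition $\I_{2^l}=\langle -1\rangle \times \langle 3\rangle \cong C_2\times C_{2^{l-2}}$, valid for $l\geq 3$, together with the fact recorded before Lemma \ref{generator-3} that $3$ is a semi-primitive root modulo $2^l$, so that $\langle 3\rangle$ is cyclic of order $2^{l-2}$. Since $a=-3^{2^sr}$ reduces, for any $l\leq m$, to $(-1)\cdot 3^{2^sr}\pmod{2^l}$, the order $r_a(d)$ is the order of the pair $\bigl((-1),3^{2^sr}\bigr)$ in that product, which equals $\operatorname{lcm}\bigl(\operatorname{ord}(-1),\operatorname{ord}(3^{2^sr})\bigr)$. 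This reduces the entire computation to two ingredients: the order of $-1$ and the order of the power $3^{2^sr}$ modulo $2^l$.

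First I would dispose of the degenerate small cases, where the product structure does not apply. For $l=1$ the group $\I_2$ is trivial, so $r_a(2)=1$ for every $s$, giving the first line. For $l=2$ the group $\I_4=\{1,3\}$ is cyclic of order $2$ with $3\equiv -1$, so I compute $3^{2^sr}\pmod 4$ directly: when $s=0$, $3^{2^sr}=3^r\equiv(-1)^r\equiv -1\pmod 4$ because $r$ is odd, hence $a\equiv 1$ and $r_a(4)=1$; when $s\geq 1$, $3^{2^sr}\equiv 1\pmod 4$, hence $a\equiv -1\equiv 3$ and $r_a(4)=2$. These yield the second and third lines.

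For $l\geq 3$ I would apply the standard cyclic-order formula inside $\langle 3\rangle$: since $r$ is odd, $\gcd(2^sr,2^{l-2})=2^{\min(s,\,l-2)}$, so $\operatorname{ord}(3^{2^sr})=2^{l-2}/2^{\min(s,\,l-2)}$, which equals $2^{l-2-s}$ when $s<l-2$ and equals $1$ when $s\geq l-2$. Combining with $\operatorname{ord}(-1)=2$ through the lcm, I obtain $r_a(2^l)=\operatorname{lcm}(2,2^{l-2-s})=2^{l-2-s}$ when $s<l-2$, since there $l-2-s\geq 1$ and the exponent already carries a factor of $2$, while $r_a(2^l)=\operatorname{lcm}(2,1)=2$ when $s\geq l-2$. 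These are precisely the last two lines.

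The only point requiring genuine care, rather than routine verification, is the interaction of the sign factor with the power of $3$ in the lcm step, and this is exactly what distinguishes Corollary \ref{rd-minus3-nam} from Corollary \ref{rd-plus3-nam}: when $s\geq l-2$ the power $3^{2^sr}$ collapses to $1$, so for $a=+3^{2^sr}$ the order drops to $1$, whereas the extra factor $-1$ forces the order of $a=-3^{2^sr}$ up to $2$. I would therefore double-check the boundary case $s=l-2$, and confirm that $l-2-s\geq 1$ holds throughout the regime $s<l-2$ so that no further collapse occurs in the lcm, ensuring the split between the two final cases lands correctly.
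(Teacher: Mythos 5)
Your proof is correct and takes essentially the approach the paper intends: the paper gives no written argument, stating only that the corollary is ``either obvious or directly coming from Lemma \ref{generator-3}'', and your computation --- the direct-product decomposition $\I_{2^l}=\langle -1\rangle\times\langle 3\rangle$ guaranteed by that lemma, the cyclic-group order formula $\ord(3^{2^sr})=2^{l-2}/\gcd(2^sr,2^{l-2})$, and the lcm step, with $l=1,2$ checked by hand --- is exactly the verification the paper omits. Your attention to the boundary regime $s\geq l-2$, where the factor $-1$ forces the order up to $2$ rather than letting it collapse to $1$, correctly isolates the one point at which this corollary differs from Corollary \ref{rd-plus3-nam}.
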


\qed

\begin{lemma} \label{gama1-parni}
Let us consider
\[\Gamma_1=\{3^{2^sr} \mid s=0,1,\ldots,m-3,\,r =0,1,3,\ldots, 2^{m-2-s}-1\}\]
as a subset of the group $\I_{2^m}$, for $m\geq 3$. Then, the partial cycle index of this set is
\[\mathcal{Z}_{(\Gamma_1,\Zz_{2^m})}(\I_{2^m})=\frac{1}{2^{m-1}}\left(x_1^{2^m}+ 2^{m-3}x_1^2x_2\prod_{l=3}^mx_{2^{l-2}}^2+\sum_{t=0}^{m-4}2^{t}x_1^{2^{m-1-t}} \prod_{i=0}^{t}x_{2^{i+1}}^{2^{m-2-t}}\right).\]
\end{lemma}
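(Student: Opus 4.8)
The plan is to evaluate the defining sum directly, namely
\[\mathcal{Z}_{(\Gamma_1,\Zz_{2^m})}(\I_{2^m}) = \frac{1}{|\I_{2^m}|}\sum_{a \in \Gamma_1}\ctype(a) = \frac{1}{2^{m-1}}\sum_{a \in \Gamma_1}\ctype(a),\]
using the closed form for $\ctype(a)$ from Corollary~\ref{nice-compact} together with the explicit orders tabulated in Corollary~\ref{rd-plus3-nam}. Since the divisors of $2^m$ are exactly $2^l$ for $0 \le l \le m$, with $\phi(2^0)=1$ and $\phi(2^l)=2^{l-1}$ for $l \ge 1$, Corollary~\ref{nice-compact} gives, for $a=3^{2^sr}$,
\[\ctype\left(3^{2^sr}\right) = \prod_{l=0}^{m} x_{r_a(2^l)}^{\phi(2^l)/r_a(2^l)}.\]
The decisive first observation is that by Corollary~\ref{rd-plus3-nam} each $r_a(2^l)$ depends only on $s$ and $l$, not on the odd cofactor $r$. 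Hence all elements of $\Gamma_1$ sharing a fixed valuation $s$ have identical cycle type, and the sum collapses to a sum over $s$ weighted by the number of admissible $r$.

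Next I would set up the bookkeeping on $\Gamma_1$. Every nonidentity element is $3^b$ with $1 \le b \le 2^{m-2}-1$; writing $b=2^sr$ with $r$ odd records its $2$-adic valuation $s$, and for fixed $s \in \{0,\dots,m-3\}$ the admissible odd values $r \in \{1,3,\dots,2^{m-2-s}-1\}$ number exactly $2^{m-3-s}$. The value $r=0$ contributes the identity $3^0=1$ once. This accounts for $1 + \sum_{s=0}^{m-3}2^{m-3-s} = 2^{m-2} = |\I_{2^m}|/2$ elements, confirming that $\Gamma_1$ is the positive-sign half of $\I_{2^m}$.

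I would then compute $\ctype(3^{2^sr})$ regime by regime. The identity gives $x_1^{2^m}$, the first summand. For $s=0$ the entry $r_a(4)=2$ produces a single $x_2$, while each $l \ge 3$ yields $x_{2^{l-2}}^{2}$; multiplying by the $x_1^2$ coming from $l=0,1$ gives $x_1^2 x_2 \prod_{l=3}^m x_{2^{l-2}}^2$, and weighting by the count $2^{m-3}$ reproduces the middle summand. For $s \ge 1$ I would split the product at $l=s+2$: the indices $l \le s+2$ all have $r_a(2^l)=1$ and contribute only to $x_1$, and collapsing their exponents via the telescoping identity $\sum_{l=3}^{s+2}2^{l-1}=2^{s+2}-4$ (together with the exponent $4$ from $l \le 2$) yields $x_1^{2^{s+2}}$, whereas the indices $s+3 \le l \le m$ have $r_a(2^l)=2^{l-2-s}$ and, after the substitution $j=l-2-s$, contribute $\prod_{j=1}^{m-2-s} x_{2^j}^{2^{s+1}}$.

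Finally I would reindex the $s \ge 1$ contributions by $t=m-3-s$, which maps $s \in \{1,\dots,m-3\}$ bijectively onto $t \in \{0,\dots,m-4\}$ and transforms the weighted cycle type into $2^{t}\,x_1^{2^{m-1-t}} \prod_{i=0}^{t} x_{2^{i+1}}^{2^{m-2-t}}$, precisely the summand displayed. Collecting the identity term, the $s=0$ term, and the reindexed $s \ge 1$ sum, then dividing by $2^{m-1}$, completes the proof. I expect the main technical obstacle to be the $s \ge 1$ case: one must correctly locate the cutoff $l=s+2$ separating the two regimes of Corollary~\ref{rd-plus3-nam}, carry out the telescoping consolidation of the $x_1$-exponents, and verify that the single substitution $t=m-3-s$ simultaneously matches the exponent of $x_1$, the common exponent $2^{m-2-t}$, the index range of the inner product, and the multiplicity $2^t$.
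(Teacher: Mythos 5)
Your proposal is correct and follows essentially the same route as the paper: both classify the elements $3^{2^sr}$ by the $2$-adic valuation $s$ of the exponent, read off $r_a(2^l)$ from Corollary \ref{rd-plus3-nam}, weight each cycle type by the count $2^{m-3-s}$ (plus the identity for $r=0$), and assemble the sum via the reindexing $t=m-3-s$. Your write-up is in fact more explicit than the paper's, which compresses the telescoping of the $x_1$-exponents and the final substitution into the phrase ``after some elementary algebraic manipulation,'' and your use of Corollary \ref{nice-compact} in place of the paper's orbit-by-orbit appeal to Lemma \ref{gen-na-gen} is an immaterial difference, since the former is just the latter assembled over all divisors.
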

\begin{proof}
%From the Lemma \ref{generator-3}, it is clear that order of an element $3^{2^sr}$ from the set $\Gamma_1$ is $2^{m-2-s}$. Total number of elements of that order, from $\Gamma_1$, is $\phi(2^{m-2-s})=2^{m-3-s}$.

Since all orbits of the  multiplicative action of the group $\I_{2^m}$ on  $\Zz_{2^m}$ are of the form 
\[\Omega_{2^m}^{2^l},\]
where $0\leq l \leq m$, it is important to find out behaviour of particular mappings  
\[x \rightarrow 3^{2^sr} x \pmod{n},\text{ where } x \in \mathbb{Z}_{2^m},\]
on those orbits. As stated before, we are interested in \textbf{ctypes} of those mappings on $\Omega_{2^m}^{2^l}$. In order to find \textbf{ctype} of mappings  $3^{2^sr}$ is restricted on orbit $\Omega_{2^m}^{2^l}$ we use results of Corollary \ref{rd-plus3-nam}.

Firstly, all elements from $\Gamma_1$ are fixing $\Omega^1_{2^m}=\{0\}$ and $\Omega^{2}_{2^m}=\{2^{m-1}\}$ as one-element orbits.

By considering parameter $l$ in the result of Corollary \ref{rd-plus3-nam}, we differentiate two major cases: when $l <3$ and when $l \geq 3$. Accordingly, the orbits
\[\Omega^{1}_{2^m}=\{0\},\,\,\Omega^{2}_{2^m}=\{2^{m-1}\}, \,\,\,\Omega^{2^2}_{2^m}=\{2^{m-2},2^m-2^{m-2}\}\]
should be treated separately. 

For example, for the mappings of the form $3^r$, where $r =1,3,\ldots, 2^{m-2-s}-1$, we have 
\[\ctype(3^r)=x_1^2x_2 \prod_{l=3}^{m}x_{2^{l-2}}^2.\]
The total number of these mappings from $\Gamma_1$ is $2^{m-3}$.

For those mappings from $\Gamma_1$  of the form $3^{2^sr}$, when $s \geq 1$, we have 
\[\ctype(3^{2^sr})=x_1^4 \prod_{l=3}^{s+2}x_{1}^{2^{l-1}}  \prod_{l=s+3}^{m}x_{2^{l-2-s}}^{2^{s+1}}=x_1^{2^{s+2}}\prod_{l=s+3}^{m}x_{2^{l-2-s}}^{2^{s+1}}.\]
The total number of these elements is $2^{m-3-s}$. clearly, for $r=0$, we get identity and ctype of it is $x_1^{2^m}$.

Finally, by adding all them together, after some elementary algebraic manipulation, we get a partial cycle index of the subset $\Gamma_1$

\[\mathcal{Z}_{(\Gamma_1,\Zz_{2^m})}(\I_{2^m})=\frac{1}{2^{m-1}}\left(x_1^{2^m}+ 2^{m-3}x_1^2x_2\prod_{l=3}^mx_{2^{l-2}}^2+\sum_{t=0}^{m-4}2^{t}x_1^{2^{m-1-t}} \prod_{i=0}^{t}x_{2^{i+1}}^{2^{m-2-t}}\right).\]

\end{proof}

\begin{lemma} \label{gamma2-parni}
Let us consider 
\[\Gamma_2=-\{3^{2^sr} \mid s=0,1,\ldots,m-3,\,r =0,1,3,\ldots, 2^{m-2-s}-1\},\]
as a subset of the group $\I_{2^m}$, for $m\geq 3$. Then, the partial cycle index of this set is
\[\mathcal{Z}_{(\Gamma_2,\Zz_{2^m})}(\I_{2^m})=\frac{1}{2^{m-1}}\left( x_1^2x_2^{2^m-2} +2^{m-3}x_1^4\prod_{l=3}^m x_{2^{l-2}}^2+x_1^2\sum_{t=0}^{m-4}2^{t}x_2^{2^{m-t-1}-1}
\prod_{i=1}^{t}x_{2^{i+1}}^{2^{m-t-2}}\right).\]
\end{lemma}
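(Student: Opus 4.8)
The plan is to follow the proof of Lemma~\ref{gama1-parni} verbatim in structure, replacing Corollary~\ref{rd-plus3-nam} by Corollary~\ref{rd-minus3-nam}, since $\Gamma_2=-\Gamma_1$ and the two corollaries record exactly the change produced by the extra sign. By Lemma~\ref{transitonzn} the orbits of $\I_{2^m}$ on $\Zz_{2^m}$ are precisely the sets $\Omega_{2^m}^{2^l}$, $0\le l\le m$, with $\lvert\Omega_{2^m}^{2^l}\rvert=\phi(2^l)$, and by Lemma~\ref{gen-na-gen} the restriction of $\pi_a$ to $\Omega_{2^m}^{2^l}$ is a product of $\phi(2^l)/r_a(2^l)$ cycles of common length $r_a(2^l)$. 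Hence for every $a\in\Gamma_2$,
\[\ctype(\pi_a)=\prod_{l=0}^m x_{r_a(2^l)}^{\phi(2^l)/r_a(2^l)},\]
so the whole problem reduces to substituting the values $r_a(2^l)$ supplied by Corollary~\ref{rd-minus3-nam} and adding the results with the correct multiplicities.

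First I would partition $\Gamma_2$ into the three families dictated by Lemma~\ref{generator-3}: the single element $-1$ (the case $r=0$); the $2^{m-3}$ elements $-3^{r}$ with $r$ odd ($s=0$); and, for each $s=1,\dots,m-3$, the $2^{m-3-s}$ elements $-3^{2^sr}$ with $r$ odd. For $-1$ I would argue directly: it fixes $0$ and $2^{m-1}$ and sends every other $x$ to $-x$, so $r_{-1}(2^l)=2$ for $l\ge 2$ and the $2^m-2$ non-fixed residues split into $(2^m-2)/2=2^{m-1}-1$ transpositions, giving $\ctype(-1)=x_1^2x_2^{2^{m-1}-1}$ as the isolated first term. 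For $s=0$, Corollary~\ref{rd-minus3-nam} gives $r_a(2)=r_a(4)=1$ and $r_a(2^l)=2^{l-2}$ for $l\ge3$, so each orbit $\Omega_{2^m}^{2^l}$ with $l\ge3$ breaks into two cycles of length $2^{l-2}$ and $\ctype(-3^r)=x_1^4\prod_{l=3}^m x_{2^{l-2}}^2$; multiplying by the count $2^{m-3}$ produces the middle term.

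The real work is the case $s\ge 1$. There Corollary~\ref{rd-minus3-nam} gives $r_a(4)=2$, together with $r_a(2^l)=2$ for $3\le l\le s+2$ and $r_a(2^l)=2^{l-2-s}$ for $l\ge s+3$. I expect the main obstacle to be the bookkeeping of the exponent of $x_2$: transpositions are contributed by $\Omega_{2^m}^{4}$, by every $\Omega_{2^m}^{2^l}$ with $3\le l\le s+2$, and also by $\Omega_{2^m}^{2^{s+3}}$ (where $r_a=2^{1}$), and these three sources must be collected into the single power $x_2^{2^{s+2}-1}$ while the orbits $\Omega_{2^m}^{2^l}$ with $l\ge s+4$ give $\prod_{j=2}^{m-2-s} x_{2^j}^{2^{s+1}}$. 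The outcome is $\ctype(-3^{2^sr})=x_1^2x_2^{2^{s+2}-1}\prod_{j=2}^{m-2-s} x_{2^j}^{2^{s+1}}$, each occurring with multiplicity $2^{m-3-s}$; a degree check ($2+2(2^{s+2}-1)+2^{s+1}\sum_{j=2}^{m-2-s}2^j=2^m$) confirms the count.

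Finally I would divide the sum of all contributions by $\lvert\I_{2^m}\rvert=2^{m-1}$ and reindex the $s$-sum by $t=m-3-s$. Under this substitution $2^{m-3-s}=2^t$, $2^{s+2}-1=2^{m-t-1}-1$ and $2^{s+1}=2^{m-t-2}$, while $\prod_{j=2}^{m-2-s}$ becomes $\prod_{i=1}^{t} x_{2^{i+1}}^{2^{m-t-2}}$ after putting $i=j-1$; factoring the common $x_1^2$ out of this sum then gives exactly the displayed third term, and the formula follows. This reindexing is the only step beyond routine substitution.
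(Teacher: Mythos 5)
Your proposal is correct and follows essentially the same route as the paper, whose entire proof is the remark that one repeats the argument of Lemma \ref{gama1-parni} with Corollary \ref{rd-minus3-nam} in place of Corollary \ref{rd-plus3-nam}: your case split into $-1$, the $2^{m-3}$ elements $-3^r$ with $r$ odd, and the families $s\ge 1$, the collection of the three sources of transpositions into the single power $x_2^{2^{s+2}-1}$, and the reindexing $t=m-3-s$ are exactly the intended computation, and your degree checks verify each ctype. One point deserves explicit attention rather than the closing claim that ``the formula follows'': your value $\ctype(-1)=x_1^2x_2^{2^{m-1}-1}$ is the correct one, while the lemma as printed (and the paper's proof, which asserts $\ctype(-1)=x_1^2x_2^{2^m-2}$) fails the degree check, since $2+2\,(2^m-2)\neq 2^m$; concretely, for $m=3$ the map $x\mapsto -x$ on $\Zz_8$ fixes $0,4$ and has the three transpositions $(1,7),(2,6),(3,5)$, giving $x_1^2x_2^3$, not $x_1^2x_2^6$. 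So your argument actually establishes a corrected version of the statement, with first term $x_1^2x_2^{2^{m-1}-1}$, and you should flag this discrepancy explicitly (it propagates to the term $\mathcal{Z}_2$ in Lemma \ref{zaparne-total} as well) instead of presenting your computation as matching the displayed formula verbatim.
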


\begin{proof}
Similarly, as in the proof of the Lemma \ref{gama1-parni}, we use the result from Corollary \ref{rd-minus3-nam}. Note that
\[\ctype(-1)=x_1^2x_2^{2^m-2}.\]
By using same type of reasoning used to prove Lemma \ref{gama1-parni} we get the result.
\end{proof}

\begin{cor} \label{partition-u-2m}
Let $\Gamma_1$ and $\Gamma_2$ be subsets of the group $\I_{2^m}$, $m \geq 3$, as it has been introduced in the lemmas \ref{gama1-parni} and \ref{gamma2-parni}. Then 
\[\I_{2^m}=\Gamma_1 \uplus \Gamma_2.\]

%\[\Gamma_1=\{3^{2^sr} \mid s=0,1,\ldots,m-3,\,r =0,1,3,\ldots, 2^{m-2-s}-1\},\] 
%\[\Gamma_2=-\Gamma_1=-\{3^{2^sr} \mid s=0,1,\ldots,m-3,\,r =0,1,3,\ldots, 2^{m-2-s}-1\}.\] 
%and 
%\[\Gamma_3=\{1,-1\}.\]
\end{cor}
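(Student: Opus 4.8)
The plan is to identify both $\Gamma_1$ and $\Gamma_2$ explicitly as cosets of the cyclic subgroup $\langle 3\rangle \leq \I_{2^m}$, and then read off the partition directly from the unique representation supplied by Lemma \ref{generator-3}.

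First I would show that $\Gamma_1 = \langle 3\rangle = \{3^b \pmod{2^m} \mid b = 0, 1, \ldots, 2^{m-2}-1\}$. Since the elements of $\Gamma_1$ are the powers $3^{2^s r}$, it suffices to prove that the exponent set
\[E = \{2^s r \mid s = 0, 1, \ldots, m-3,\ r \in \{0,1,3,\ldots,2^{m-2-s}-1\}\}\]
coincides with $\{0, 1, \ldots, 2^{m-2}-1\}$. The inclusion $E \subseteq \{0,\ldots,2^{m-2}-1\}$ is immediate from the bound $2^s r \leq 2^s(2^{m-2-s}-1) = 2^{m-2}-2^s < 2^{m-2}$, while $r=0$ contributes the exponent $0$. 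For the reverse inclusion, every $b$ with $1 \leq b \leq 2^{m-2}-1$ factors uniquely as $b = 2^s r$ with $r$ odd; the bound $b < 2^{m-2}$ forces $s \leq m-3$ and $r \leq 2^{m-2-s}-1$, so $b \in E$. Because the order of $3$ modulo $2^m$ is $2^{m-2}$, distinct exponents in $\{0,\ldots,2^{m-2}-1\}$ yield distinct powers, whence $\Gamma_1 = \langle 3\rangle$ and $\abs{\Gamma_1} = 2^{m-2}$.

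Consequently $\Gamma_2 = -\Gamma_1 = -\langle 3\rangle$ is the other coset of $\langle 3\rangle$ in $\I_{2^m}$. By Lemma \ref{generator-3} every $w \in \I_{2^m}$ is uniquely of the form $(-1)^a 3^b$ with $a \in \{0,1\}$ and $0 \leq b \leq 2^{m-2}-1$; the elements with $a = 0$ are exactly $\Gamma_1$ and those with $a = 1$ are exactly $\Gamma_2$, so $\I_{2^m} = \Gamma_1 \cup \Gamma_2$. Disjointness is the one point needing care: if an element lay in $\Gamma_1 \cap \Gamma_2$, then $3^b = -3^{b'}$ for suitable $b, b'$, producing two distinct $(-1)^a 3^{(\cdot)}$ representations of a single element and contradicting the uniqueness in Lemma \ref{generator-3} (equivalently, $-1 \notin \langle 3\rangle$). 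Hence the union is disjoint and $\I_{2^m} = \Gamma_1 \uplus \Gamma_2$.

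The only genuinely delicate step is the identification $\Gamma_1 = \langle 3\rangle$, that is, verifying that the double-indexed family $\{2^s r\}$ enumerates each residue in $\{0,\ldots,2^{m-2}-1\}$ exactly once. This is precisely the statement that the $2$-adic valuation gives a unique odd-part factorization, together with the bookkeeping of matching the stated ranges of $s$ and $r$ against the bound $2^{m-2}$; everything else follows formally from Lemma \ref{generator-3}.
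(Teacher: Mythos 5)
Your proof is correct and takes essentially the same route as the paper: both rest on the unique representation $w=(-1)^a3^b$ from Lemma \ref{generator-3}, reading off the union from the two values of $a$ and disjointness from uniqueness. The one thing you add is the explicit verification that the exponents $2^sr$ enumerate $\{0,1,\ldots,2^{m-2}-1\}$ exactly once (so that $\Gamma_1=\langle 3\rangle$), a bookkeeping step the paper's proof leaves implicit, so your version is if anything more complete.
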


\begin{proof}
According to the Lemma \ref{generator-3}, every number from $\I_{2^m}$, if not $-1$ or $1$, has a form
\[(-1)^a3^{2^sr} \text{ where } a \in \{0,1\},\;s\in \{0,1,\ldots,m-3\} \text{ and r is an odd number},\]  
so $\Gamma_1 \cup \Gamma_2 = \I_{2^m}$. 
Also, from the same lemma it follows that $\Gamma_1 \cap \Gamma_2 =\emptyset$. Hence,  $\Gamma_1 \uplus \Gamma_2$ is disjoint union and equal to $\I_{2^m}$. 
\end{proof}

\begin{lemma} \label{zaparne-total} The cycle index 
$\mathcal{Z}=\mathcal{Z}_{(\I_{2^m},\Zz_{2^m})}$ of the permutation group $\I_{2^m}$ acting on $\Zz_{2^m}$ is
\begin{eqnarray}
% \nonumber to remove numbering (before each equation)
 \nonumber \mathcal{Z}_{\,\,} &=& x_1^2\;\text{ if } m=1, \\
 \nonumber \mathcal{Z}_{\,\,} &=& \frac{1}{2}(x_1^4+x_1^2x_2)\;\text{ if } m=2, \\
  \nonumber \mathcal{Z}_{\,\,} &=& \frac{1}{2^{m-1}}(\mathcal{Z}_1+\mathcal{Z}_2)\text{ if } m\geq 3, \text{ where } \\
 \nonumber \mathcal{Z}_1 &= & x_1^{2^m}+ 2^{m-3}x_1^2x_2\prod_{l=3}^mx_{2^{l-2}}^2+\sum_{t=0}^{m-4}2^{t}x_1^{2^{m-1-t}} \prod_{i=0}^{t}x_{2^{i+1}}^{2^{m-2-t}},\\
 \nonumber \mathcal{Z}_2 &=&  x_1^2x_2^{2^m-2} +2^{m-3}x_1^4\prod_{l=3}^m x_{2^{l-2}}^2+x_1^2\sum_{t=0}^{m-4}2^{t}x_2^{2^{m-t-1}-1}
\prod_{i=1}^{t}x_{2^{i+1}}^{2^{m-t-2}}.
\end{eqnarray}

\end{lemma}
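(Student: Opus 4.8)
The plan is to split into the three regimes by the value of $m$, and to recognize that the case $m \geq 3$ is already essentially assembled from the preceding results, so that the only genuinely separate work lies in the two small cases.

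First I would dispose of $m = 1$ and $m = 2$ by direct enumeration. For $m = 1$ the group $\I_2$ is trivial: its single element is the identity, which fixes both points of $\Zz_2$, so its only ctype is $x_1^2$ and hence $\mathcal{Z} = x_1^2$. For $m = 2$ the group $\I_4 = \{1,3\}$ has two elements: the identity, with ctype $x_1^4$, and $\pi_3$, which fixes $0$ and $2$ and transposes $1$ and $3$, giving ctype $x_1^2 x_2$. Averaging over the two elements yields $\tfrac{1}{2}(x_1^4 + x_1^2 x_2)$. These computations are immediate and require no machinery.

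For the main case $m \geq 3$, I would invoke Corollary \ref{partition-u-2m}, which furnishes the disjoint decomposition $\I_{2^m} = \Gamma_1 \uplus \Gamma_2$. By the additivity of the partial cycle index over a set partition (the first Lemma of Section~2), the full cycle index is the sum of the two partial cycle indices,
\[
\mathcal{Z}_{(\I_{2^m},\Zz_{2^m})} = \mathcal{Z}_{(\Gamma_1,\Zz_{2^m})}(\I_{2^m}) + \mathcal{Z}_{(\Gamma_2,\Zz_{2^m})}(\I_{2^m}).
\]
It then suffices to substitute the two partial cycle indices already computed in Lemma \ref{gama1-parni} and Lemma \ref{gamma2-parni}. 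Since both carry the common normalizing factor $1/2^{m-1}$, factoring it out identifies the two parenthesized polynomials with $\mathcal{Z}_1$ and $\mathcal{Z}_2$ respectively, giving $\mathcal{Z} = \tfrac{1}{2^{m-1}}(\mathcal{Z}_1 + \mathcal{Z}_2)$, as claimed.

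The real subtlety — already discharged upstream — is not in this assembly but in the feeder lemmas: ensuring that $\Gamma_1$ and $\Gamma_2$ exhaust $\I_{2^m}$ without overlap, so that the identity (the $r=0$ term in $\Gamma_1$) and the element $-1$ (the $r=0$ term in $\Gamma_2$) are each counted exactly once, and that the exponent bookkeeping in the products over $l$ correctly records the ctypes on each orbit $\Omega_{2^m}^{2^l}$ via Corollaries \ref{rd-plus3-nam} and \ref{rd-minus3-nam}. The one thing worth flagging explicitly is the absence of double-counting at the boundary between $\Gamma_1$ and $\Gamma_2$, namely the distinct roles of $+1$ versus $-1$; this is precisely what Corollary \ref{partition-u-2m} guarantees. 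Granting these inputs, the final identity is a one-line combination, and I expect no further obstacle in the present lemma.
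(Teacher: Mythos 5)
Your proposal is correct and follows essentially the same route as the paper: direct enumeration for $m=1,2$, then for $m\geq 3$ the partition $\I_{2^m}=\Gamma_1\uplus\Gamma_2$ from Corollary \ref{partition-u-2m} together with the additivity of partial cycle indices, substituting the results of Lemma \ref{gama1-parni} and Lemma \ref{gamma2-parni}. You merely spell out the small cases and the no-double-counting point that the paper dismisses as trivial, which is a harmless elaboration rather than a different argument.
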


\begin{proof}

The claim follows trivially for $m=1$ and $m=2$. For the case $m\geq 3$, we use results given in Lemma \ref{gama1-parni} and Lemma \ref{gamma2-parni}, combined with the fact given in Corollary \ref{partition-u-2m}, that
\[\I_{2^m}=\Gamma_1 \uplus \Gamma_2.\]
\end{proof}

\subsection{Cycle index od $\I_{p^m}$ - $p$ an odd, prime number}

The following lemma considers the case of $\I_{p^m}$, where $p$ is an od prime number. As it has been already noted, this group is cyclic and consequently, it is much easier to find its cycle index.
\begin{lemma} \label{zaneparne-total} Let $p$ be an odd prime. The cycle type of the permutation group $\I_{p^m}$ acting on $\Zz_{p^m}$ is
\[\mathcal{Z}_{(\I_{p^m},\Zz_{p^m})}=\frac{1}{\phi(p^m)}\sum_{k=1}^{\phi\left(p^m\right)}\prod_{i=0}^{m} x_{u(i,k)}^{v(i,k)},\]
where \[v(i,k)=(\phi(p^i),k) \text{ and } u(i,k)=\frac{\phi(p^i)}{v(i,k)}.\]
\end{lemma}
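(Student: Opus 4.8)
The plan is to exploit the fact that $\I_{p^m}$ is cyclic for odd prime $p$ and read off each permutation's cycle type directly from Corollary \ref{nice-compact}. Since $\I_{p^m}$ is cyclic of order $\phi(p^m)$, fix a generator $g$ (a primitive root modulo $p^m$); then every element is $g^k$ for $k=1,\ldots,\phi(p^m)$, and the averaging in the cycle index runs over these $k$. The task therefore reduces to computing, for each $a=g^k$ and each divisor $p^i$ of $p^m$, the quantities $r_a(p^i)$ and $\phi(p^i)/r_a(p^i)$ appearing in the compact formula $(\Asterisk)$, and then verifying that these equal $u(i,k)$ and $v(i,k)$ respectively.

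First I would invoke Corollary \ref{nice-compact}, which gives $\ctype(a)=\prod_{d\mid p^m}x_{r_a(d)}^{\phi(d)/r_a(d)}$. Because the only divisors of $p^m$ are $p^0,p^1,\ldots,p^m$, the product over $d\mid p^m$ becomes the product $\prod_{i=0}^m$ indexed by the exponent $i$, matching the stated form. The heart of the argument is the computation of $r_{g^k}(p^i)$, the multiplicative order of $g^k$ modulo $p^i$. The key number-theoretic input is that since $p$ is odd, $g$ remains a primitive root modulo $p^i$ for every $i\le m$, so the image of $g$ in $\I_{p^i}$ has full order $\phi(p^i)$. For a generator of a cyclic group of order $N=\phi(p^i)$, the order of its $k$-th power is $N/\gcd(N,k)$. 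Hence $r_{g^k}(p^i)=\phi(p^i)/(\phi(p^i),k)=u(i,k)$, and the corresponding exponent is $\phi(p^i)/r_{g^k}(p^i)=(\phi(p^i),k)=v(i,k)$, exactly as claimed. Substituting into $(\Asterisk)$ with the sum over $k$ ranging over $\I_{p^m}$ (reindexed by the exponent of $g$) and $\phi(n)=\phi(p^m)$ yields the displayed formula.

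The step I expect to require the most care is justifying that a single primitive root $g$ modulo $p^m$ restricts to a primitive root modulo every lower power $p^i$, so that the order formula $N/\gcd(N,k)$ applies uniformly across all $i$ with the \emph{same} exponent $k$. This is where oddness of $p$ is essential: for $p=2$ the group $\I_{2^m}$ is not cyclic for $m\ge 3$, which is precisely why that case was handled separately. For odd $p$ the standard theory of primitive roots guarantees that $g$ mod $p^i$ generates $\I_{p^i}$ for all $i\le m$, but I would state this explicitly, perhaps noting that reduction modulo $p^i$ sends a generator of the cyclic group of order $\phi(p^m)=p^{m-1}(p-1)$ to an element whose order is $\phi(p^i)=p^{i-1}(p-1)$. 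A minor subtlety worth flagging is the index $i=0$: here $d=p^0=1$, $\phi(1)=1$, $r_a(1)=1$, giving the factor $x_1^1$, which accounts for the fixed point $0\in\Zz_{p^m}$ and is consistent with $u(0,k)=v(0,k)=1$. Once these points are settled, the remainder is a direct substitution into the already-established compact cycle index, so no further obstacle arises.
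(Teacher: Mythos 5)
Your proposal is correct and follows essentially the same route as the paper: both use that $\I_{p^m}$ is cyclic for odd $p$, fix a generator $\beta$ (your $g$) with $r_\beta(p^i)=\phi(p^i)$ for all $i\le m$, apply the order formula $\ord(\beta^k)=\phi(p^i)/(k,\phi(p^i))$, and substitute into Corollary \ref{nice-compact}. Your explicit justification that a generator of $\I_{p^m}$ reduces to a generator of $\I_{p^i}$ (via surjectivity of reduction) is a point the paper merely asserts, so your write-up is if anything slightly more complete.
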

\begin{proof} It is well known that in the case of odd prime $p$, the automorphism group $\I_{p^m}$ is cyclic \cite{Zass}. Let $\beta$ be a generator of $\I_{p^m}$. Then, $\ord(\beta)=\phi(p^m)$. It is elementary fact that in an arbitrary group $G$ and $g \in G$, such that $\ord(g)=n$ it holds that 
\[\ord(g^k)=\frac{n}{(k,n)}.\] 
Since $r_{\beta}(p^i)=\phi(p^i)$, for  $i=0,1,\ldots,m$, we conclude that
\[r_{\beta^k}(p^i)=\frac{\phi\left(p^i\right)}{(k,\phi(p^i))}, \text{ for } i=0,1,\ldots,m.\]
Now, the claim follows directly from the result (\ref{cycle-index-compact}) given in the Corollary \ref{nice-compact}.
\end{proof}
\subsection{Cycle index of direct product of permutation groups}
Since we found the cycle indices of all groups $\I_{p^m}$ when $p$ is a prime number, there is a natural question if there exists a way to combine them together in order to obtain the cycle index of $\I_n$, where $n$ is the product of those prime power components. Hence, we need something like the cycle index of the direct product of permutation groups.

Let $G_1,G_2$ be permutation groups acting on sets $X_1,X_2$ respectively. Let $G=G_1 \times G_2$ and $X=X_1\times X_2$ be the direct product of corresponding groups and sets. For an element $x=(x_1,x_2)$ of $X$ and an element $g=(g_1,g_2)$ of $G$, we define the action of $g$ on $x$ by
\[(g,a)\mapsto (g_1x_1,g_2x_2)\]
Evidently, $G$ is a permutation group on $X$.
Let $P$ and $Q$ be polynomials
\[P(x_1,x_2,\ldots,x_u)=\sum a_{i_1i_2\ldots i_u}x_1^{i_1}x_2^{i_2}\ldots x_u^{i_u},\]
\[Q(x_1,x_2,\ldots,x_v)=\sum b_{j_1j_2\ldots j_v}x_1^{j_1}x_2^{j_2}\ldots x_v^{j_v}\]
In \cite{HarHigh} the following product operator was defined
\[P\circledast Q=\sum a_{i_1i_2\ldots i_u} b_{j_1j_2\ldots j_v} \prod_{\substack{1\leq l \leq u \\ 1\leq m \leq v}}(x_l^{i_1} \circledast x_m^{j_m}),\]
where
\[x_l^{i_1} \circledast x_m^{j_m}=x_{\lcm(l,m)}^{i_1j_m\gcd(l,m)}\]
We need the following lemma. For proof, see \cite{HarHigh} and \cite{Wei}.
\begin{lemma} \label{prodpermgroup} The cycle index of the natural action of permutation group $G_1 \times G_2$ on $X_1 \times X_2$ induced by
actions $G_1$ on $X_1$ and $G_2$ on $X_2$ can be expressed as:
\[\mathcal{Z}_{(G_1\times G_2,X_1 \times X_2)}=\mathcal{Z}_{(G_1,X_1)} \circledast \mathcal{Z}_{(G_2,X_2)}.\]
\end{lemma}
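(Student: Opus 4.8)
The plan is to reduce the statement to a single combinatorial fact about how two cycles combine under the product action, and then to lift that fact to the level of cycle indices by exploiting the bilinearity of the operator $\circledast$.

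First I would fix elements $g_1 \in G_1$ and $g_2 \in G_2$ and determine $\ctype((g_1,g_2))$ directly, where $(g_1,g_2)$ acts on $X_1 \times X_2$ by $(x_1,x_2) \mapsto (g_1 x_1, g_2 x_2)$. The orbits of $(g_1,g_2)$ respect the orbit decompositions of $g_1$ and $g_2$ separately: if $C$ is a cycle of $g_1$ of length $l$ and $D$ is a cycle of $g_2$ of length $m$, then $(g_1,g_2)$ maps $C \times D$ into itself, so it suffices to analyse the permutation induced on a single product $C \times D$. The crucial sublemma is then purely number-theoretic: identifying $C$ with $\Zz_l$ and $D$ with $\Zz_m$ so that $g_1$ and $g_2$ both act by adding $1$, the point $(a,b)$ returns to itself after $t$ steps iff $t \equiv 0 \pmod{l}$ and $t \equiv 0 \pmod{m}$, that is, iff $\lcm(l,m) \mid t$. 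Hence every cycle of $(g_1,g_2)$ on $C \times D$ has length $\lcm(l,m)$, and since $|C \times D| = lm = \gcd(l,m)\lcm(l,m)$, there are exactly $lm/\lcm(l,m) = \gcd(l,m)$ of them.

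Next I would assemble the full cycle type. If $g_1$ has $\lambda_l(g_1)$ cycles of length $l$ and $g_2$ has $\lambda_m(g_2)$ cycles of length $m$, then there are $\lambda_l(g_1)\lambda_m(g_2)$ products $C \times D$ of type $(l,m)$, each contributing $\gcd(l,m)$ cycles of length $\lcm(l,m)$. Collecting these contributions over all pairs $(l,m)$ yields
\[\ctype((g_1,g_2)) = \prod_{l,m} x_{\lcm(l,m)}^{\lambda_l(g_1)\,\lambda_m(g_2)\,\gcd(l,m)},\]
which is precisely the monomial $\bigl(\prod_l x_l^{\lambda_l(g_1)}\bigr) \circledast \bigl(\prod_m x_m^{\lambda_m(g_2)}\bigr) = \ctype(g_1) \circledast \ctype(g_2)$, read off from the defining rule $x_l^{i}\circledast x_m^{j}=x_{\lcm(l,m)}^{ij\gcd(l,m)}$.

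Finally I would average. Since $|G_1 \times G_2| = |G_1|\,|G_2|$ and the pairs $(g_1,g_2)$ range over the full product, the previous step gives
\[\mathcal{Z}_{(G_1 \times G_2, X_1 \times X_2)} = \frac{1}{|G_1||G_2|}\sum_{g_1,g_2} \ctype(g_1)\circledast \ctype(g_2).\]
Because $\circledast$ is bilinear — it distributes over sums of monomials and pulls out the scalars $1/|G_1|$ and $1/|G_2|$ — the double sum factors, and the right-hand side collapses to $\bigl(\tfrac{1}{|G_1|}\sum_{g_1}\ctype(g_1)\bigr) \circledast \bigl(\tfrac{1}{|G_2|}\sum_{g_2}\ctype(g_2)\bigr) = \mathcal{Z}_{(G_1,X_1)} \circledast \mathcal{Z}_{(G_2,X_2)}$, as claimed. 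The one step that genuinely carries the proof is the sublemma on $C \times D$; everything afterward is bookkeeping, the only subtlety being to confirm that $\circledast$ was defined so as to be bilinear, so that averaging commutes with the product operator.
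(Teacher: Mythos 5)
Your proof is correct, and it is worth noting that the paper does not actually prove this lemma at all: it states it and defers to \cite{HarHigh} and \cite{Wei} for the proof, so there is no in-paper argument to compare against. Your proposal supplies the standard self-contained argument (essentially the one in Harrison--High), and its structure is sound at every step: the sets $C \times D$, for $C$ a cycle of $g_1$ and $D$ a cycle of $g_2$, partition $X_1 \times X_2$ into $(g_1,g_2)$-invariant blocks; on a block of type $(l,m)$ the return time of any point is the minimal $t>0$ with $l \mid t$ and $m \mid t$, namely $\lcm(l,m)$, giving exactly $lm/\lcm(l,m)=\gcd(l,m)$ cycles; collecting over the $\lambda_l(g_1)\lambda_m(g_2)$ blocks of each type yields $\ctype((g_1,g_2))=\prod_{l,m}x_{\lcm(l,m)}^{\lambda_l(g_1)\lambda_m(g_2)\gcd(l,m)}=\ctype(g_1)\circledast\ctype(g_2)$; and bilinearity of $\circledast$ (which holds by construction, since the paper defines it coefficientwise from the monomial rule) lets the averaged double sum factor. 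Your derivation in fact explains \emph{why} the otherwise unmotivated exponent $\gcd(l,m)$ appears in the definition of $x_l^{i}\circledast x_m^{j}=x_{\lcm(l,m)}^{ij\gcd(l,m)}$: it is precisely the number of cycles induced on one product block. (Incidentally, your reading also silently corrects a typo in the paper's displayed definition, where $x_l^{i_1}$ should be $x_l^{i_l}$.) What your route buys is self-containedness: the paper's citation-only treatment leaves the reader dependent on two external sources, whereas your two-page argument proves the one combinatorial fact that carries everything and reduces the rest to bookkeeping, exactly as you say.
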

Let $n=\prod_{i=1}^sp_i^{\alpha_i}$. Applying the ring isomorphism  \[\Zz_n \cong \bigoplus_{i=1}^s\Zz_{p_i^{\alpha_i}},\] it follows that
\[\I_n\cong\bigoplus_{i=1}^s\I_{p_i^{\alpha_i}}.\]
Hence, according to Lemma \ref{prodpermgroup}, we have
\[\mathcal{Z}_{(\I_n,\Zz_n)}=\mathcal{Z}_{(\I_{p_1^{\alpha_1}},\Zz_{p_1^{\alpha_1}})}\circledast \mathcal{Z}_{(\I_{p_2^{\alpha_2}},\Zz_{p_2^{\alpha_2}})} \circledast \cdots \circledast \mathcal{Z}_{(\I_{p_s^{\alpha_s}},\Zz_{p_s^{\alpha_s}})}.\]
Since cycle indices of prime power components are given in Lemma \ref{zaparne-total} and Lemma \ref{zaneparne-total}, the cycle index $\mathcal{Z}_{(\I_n,\Zz_n)}$ can be calculated as above. 
\begin{example}
Let us find the cycle index of $\I_{60}$. Frome lemmas \ref{zaparne-total} and \ref{zaneparne-total} we know that
\[\mathcal{Z}_{(\I_{2^2},\Zz_{2^2})}=\frac{1}{2}(x_1^4+x_1^2x_2), \mathcal{Z}_{(\I_{3},\Zz_{3})}=\frac{1}{2}(x_1^3+x_1x_2), \mathcal{Z}_{(\I_{5},\Zz_{5})}=\frac{1}{4}(x_1^5+2x_1x_4+x_1x_2^2).\]

Therefore, and according to Lemma \ref{prodpermgroup}
\[\mathcal{Z}_{(\I_{60},\Zz_{60})}=\mathcal{Z}_{(\I_{2^2},\Zz_{2^2})}\circledast \mathcal{Z}_{(\I_{3},\Zz_{3})} \circledast  \mathcal{Z}_{(\I_{5},\Zz_{5})}.\]

\noindent Firstly, we calculate product of the first two cycle indices, that actually is $\mathcal{Z}_{(\I_{12},\Zz_{12})}$. So,
\begin{dmath*}
\mathcal{Z}_{(\I_{12},\Zz_{12})}=\mathcal{Z}_{(\I_{2^2},\Zz_{2^2})}\circledast \mathcal{Z}_{(\I_{3},\Zz_{3})}
=\frac{1}{2}(x_1^4+x_1^2x_2)\circledast \frac{1}{2}(x_1^3+x_1x_2)=
\frac{1}{4}(x_1^{12}+x_1^4x_2^4+x_1^2x_2^5+x_1^6x_2^3).
\end{dmath*}

Finally, we get 

%\begin{eqnarray*}
%\mathcal{Z}_{(\I_{60},\Zz_{60})}=\mathcal{Z}_{(\I_{12},\Zz_{12})}\circledast \mathcal{Z}_{(\I_{5},\Zz_{5})}&=&\\
%\frac{1}{4}(x_1^{12}+x_1^4x_2^4+x_1^2x_2^5+x_1^6x_2^3)\circledast \frac{1}{4}(x_1^5+2x_1x_4+x_1x_2^2)&=&\\
%\frac{1}{16}(x_1^{60}+2x_1^4x_2^4x_4^{12}+x_1^4x_2^{28}+2x_1^2x_2^5x_4^{12}+
%x_1^2x_2^{29}+2x_1^6x_2^3x_4^{12}+&&\\
%x_1^6x_2^{27}+2x_1^{12}x_4^{12}+x_1^{12}x_2^{24}+x_1^{10}x_2^{25}+
%x_1^{20}x_2^{20}+x_1^{30}x_2^{15}).& &
%\end{eqnarray*}
\begin{dmath*}
\mathcal{Z}_{(\I_{60},\Zz_{60})}=\mathcal{Z}_{(\I_{12},\Zz_{12})}\circledast \mathcal{Z}_{(\I_{5},\Zz_{5})}=
\frac{1}{4}(x_1^{12}+x_1^4x_2^4+x_1^2x_2^5+x_1^6x_2^3)\circledast \frac{1}{4}(x_1^5+2x_1x_4+x_1x_2^2)=
\frac{1}{16}(x_1^{60}+2x_1^4x_2^4x_4^{12}+x_1^4x_2^{28}+2x_1^2x_2^5x_4^{12}+
x_1^2x_2^{29}+2x_1^6x_2^3x_4^{12}+
x_1^6x_2^{27}+2x_1^{12}x_4^{12}+x_1^{12}x_2^{24}+x_1^{10}x_2^{25}+
x_1^{20}x_2^{20}+x_1^{30}x_2^{15}).
\end{dmath*}
Certainly, the same result would be obtained by simple application of the formula (\ref{cycle-index-compact}) in the Corollary \ref{nice-compact}.

\end{example}

\section{Conclusions}
In this paper we studied the group action of the automorphism group $\I_n=\aut(\Zz_n)$ on the set $\Zz_n$, that is the set of residue classes modulo $n$. The main goal of the paper was to find the cycle index of that action. Based on some elementary number theory and algebraic techniques, we get nice, compact result in the Corollary \ref{nice-compact}. Also, in the Lemma \ref{zaparne-total} and Lemma \ref{zaneparne-total}, we provided technically more detailed look at the building blocks of cycle index of the studied group action.

In the further research, as we announced in the Introduction, it could be interesting to examine some combinatorial problems as a problem of finding the number of orbits or equivalence classes of subsets of $\Zz_n$. Namely, there is a natural way to induce the discussed group action on the set $\Ov_k$, standing for the set of all subsets of $\Zz_n$ of size $k \leq n$ and then the task could be principally resolved by P\'olya's theory application as in \cite{Polya,Bruij,SurPol}.

It is worth of mentioning that the number of orbits of sets of $\Ov_k$ is related to the problem of factorizations of abelian groups into direct product of subsets that is examined in \cite{Szabo, Sands, Boz}. Hence, it seems that some further research in this topic could be very fruitful.

\bibliographystyle{amsplain}

\end{document}